\newtheorem{thm}{Theorem}
\newtheorem{lem}[thm]{Lemma}
\newtheorem{prop}[thm]{Proposition}
\theoremstyle{definition}
\newtheorem*{theorem*}{Theorem}
\theoremstyle{remark}
\numberwithin{equation}{section}
\def\<{\langle}
\def\>{\rangle}
\begin{document}
	\title[]{Universal Toeplitz operators on the Hardy space over the polydisk}
	\author{Marcos Ferreira, S. Waleed Noor}%
	\address{Departamento de Ciências Exatas e Tecnológicas, UESC, Ilhéus, Brasil.}
\email{$\mathrm{msferreira@uesc.br}$ (1st author).}
	
	\address{IMECC, Universidade Estadual de Campinas, Campinas-SP, Brazil.}
	\email{$\mathrm{waleed@unicamp.br}$ (2nd author).}
	\begin{abstract}  The \emph{Invariant Subspace Problem} (ISP) for Hilbert spaces asks if every bounded linear operator has a non-trivial closed invariant subspace. Due to the existence of \emph{universal} operators (in the sense of Rota), the ISP may be solved by describing the invariant subspaces of these operators alone. We characterize all anaytic Toeplitz operators $T_\phi$ on the Hardy space $H^2(\mathbb{D}^n)$ over the polydisk $\mathbb{D}^n$ for $n>1$ whose adjoints satisfy the \emph{Caradus criterion} for universality, that is, when $T_\phi^*$ is surjective and has infinite dimensional kernel. In particular if $\phi$ in a non-constant inner function on $\mathbb{D}^n$, or a polynomial in the ring $\mathbb{C}[z_1,\ldots,z_n]$ that has zeros in $\mathbb{D}^n$ but is zero-free on $\mathbb{T}^n$, then $T_\phi^*$ is universal for $H^2(\mathbb{D}^n)$. The analogs of these results for $n=1$ are not true.
	\end{abstract}
	
	\subjclass[2010]{ Primary 30H10, 47B35; Secondary 47A15}
	\keywords{Hardy space over the polydisk, Toeplitz operator, universal operator, invariant subspace.}
	\maketitle{}
	
	\section*{Introduction}
	One of the most important open problems in operator theory is the \emph{Invariant Subspace Problem} (ISP), which asks: Given a complex separable Hilbert space $\mathcal{H}$ and a bounded linear operator $T$ on $\mathcal{H}$, does $T$ have a nontrivial invariant subspace? An invariant subspace of $T$ is a closed subspace $E\subset\mathcal{H}$ such that $TE\subset E$. The recent monograph by Chalendar and Partington \cite{Chalendar Partington book} is a reference for some modern approaches to the ISP. In 1960, Rota \cite{Rota} demonstrated the existence of operators that have an invariant subspace structure so rich that they could model \emph{every} Hilbert space operator. \\ \\
	$\mathbf{Definition.}$
	\emph{Let $\mathcal{B}$ be a Banach space and $U$ a bounded linear operator on $\mathcal{B}$. Then $U$ is said to be universal for $\mathcal{B}$, if for any bounded linear operator $T$ on $\mathcal{B}$ there exists a constant $\alpha\neq 0$ and an invariant subspace $\mathcal{M}$ for $U$ such that the restriction $U|_\mathcal{M}$ is similar to $\alpha T$. } \\ 
	
	If $U$ is universal for a separable, infinite dimensional Hilbert space $\mathcal{H}$, then the ISP is equivalent to the assertion that every \emph{minimal} invariant subspace for $U$ is one dimensional. The main tool thus far  for identifying universal operators has been the following criterion of Caradus \cite{Caradus}. \\ \\
	$\mathbf{The \ Caradus \  Criterion.}$
	\emph{	Let $\mathcal{H}$ be a separable infinite dimensional Hilbert space and $U$ a bounded linear operator on $\mathcal{H}$. If $\mathrm{ker}(U)$ is infinite dimensional and $U$ is surjective, then $U$ is universal for $\mathcal{H}$. } \\
	
	Let $\mathbb{T}^n$ be the Cartesian product of $n$ copies of $\mathbb{T}=\partial\mathbb{D}$ equiped with the normalized Haar measure $\sigma$. Let $L^2(\mathbb{T}^n)$ denote the usual Lebesgue space and $L^\infty(\mathbb{T}^n)$ the essentially bounded functions with respect to $\sigma$. The Hardy space $H^2(\mathbb{D}^n)$ is the Hilbert space of holomorphic functions $f$ on $\mathbb{D}^n$ satisfying
	 \[
	 ||f||^2:=\sup_{0<r<1}\int_{\mathbb{T}^n}|f(r\zeta)|^2d\sigma(\zeta)<\infty.
	 \]
	 Denote by $H^\infty(\mathbb{D}^n)$ the space of bounded analytic functions on $\mathbb{D}^n$. It is well-known that both $H^2(\mathbb{D}^n)$ and $H^\infty(\mathbb{D}^n)$ can be viewed as subspaces of $L^2(\mathbb{T}^n)$ and $L^\infty(\mathbb{T}^n)$ respectively by identifying $f$ with its boundary function $f(\zeta):=\lim_{r\to 1}f(r\zeta)$ for almost every $\zeta\in\mathbb{T}^n$. If $|f|=1$ almost everywhere on $\mathbb{T}^n$ then $f$ is called \emph{inner}. Let $P$ be the orthogonal projection of $L^2(\mathbb{T}^n)$ onto $H^2(\mathbb{D}^n)$. The \emph{Toeplitz operator} $T_\phi$ with symbol $\phi\in L^\infty(\mathbb{T}^n)$ is defined by
	 \[
	 T_\phi f=P(\phi f)
	 \]
	for $f\in H^2(\mathbb{D}^n)$. Then $T_\phi$ is a bounded linear operator on $H^2(\mathbb{D}^n)$. The adjoint of $T_\phi$ is simply given by $T_\phi^*=T_{\bar{\phi}}$. If $\phi\in H^\infty(\mathbb{D}^n)$, then $T_\phi f=\phi f$ for all $f\in H^2(\mathbb{D}^n)$ and $T_\phi$ is called an \emph{analytic Toeplitz operator}.
	
	The best known examples of universal operators are all adjoints of analytic Toeplitz operators on $H^2(\mathbb{D})$, or are equivalent to one of them. For example $T^*_\phi$ when $\phi$ is a singular inner function or infinite Blaschke product. Another well known example was discovered by Nordgren, Rosenthal and Wintrobe \cite{Nordgren-Rosenthal-Wintrobe} in the 1980's. They proved that if $\phi$ is a hyperbolic automorphism of the unit disk $\mathbb{D}$,  then $C_\phi-\lambda I$ is universal for all $\lambda$ in the interior of the spectrum of $C_\phi$. In the last few years, Cowen and Gallardo-Gutiérrez (\cite{Cowen- Gallardo 1},\cite{Cowen- Gallardo 2},\cite{Cowen- Gallardo 3},\cite{Cowen- Gallardo 4}) have undertaken a thorough analysis of adjoints of analytic Toeplitz operators that are universal for $H^2(\mathbb{D})$.
	
	The objective of this article is to consider analytic Toeplitz operators $T_\phi$ whose adjoints are universal on $H^2(\mathbb{D}^n)$ for $n>1$. We now state our main result. 
	
	\begin{theorem*} \emph{Let $\phi\in H^\infty(\mathbb{D}^n)$ for $n>1$. Then $T_\phi^*$ satisfies the Caradus criterion for universality if and only if $\phi$ is invertible in $L^\infty(\mathbb{T}^n)$ but non-invertible in $H^\infty(\mathbb{D}^n)$}.
		\end{theorem*}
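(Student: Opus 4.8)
\medskip
\noindent\emph{Proof strategy.} The plan is to reduce the statement to two equivalences for $\phi\in H^\infty(\mathbb{D}^n)$. Throughout write $H^2=H^2(\mathbb{D}^n)$, let $k_w$ be its reproducing kernel (so $\|k_w\|^2=\prod_{j=1}^{n}(1-|w_j|^2)^{-1}$), and recall the elementary identities $T_\phi f=\phi f$, $T_\phi^*=T_{\bar\phi}$, $T_\phi T_\psi=T_{\phi\psi}$ for $\phi,\psi\in H^\infty(\mathbb{D}^n)$, $\|T_\phi f\|^2=\int_{\mathbb{T}^n}|\phi|^2|f|^2\,d\sigma$ (since $\phi f\in H^2$), and $T_\phi^*k_w=\overline{\phi(w)}\,k_w$ (immediate from the reproducing property). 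As $H^2$ is separable and infinite dimensional, and since on a Hilbert space an operator is surjective exactly when its adjoint is bounded below, $T_\phi^*$ satisfies the Caradus criterion if and only if $T_\phi$ is bounded below \emph{and} $\dim\ker T_\phi^*=\infty$. I will establish: \textbf{(a)} $T_\phi$ is bounded below $\iff$ $\phi$ is invertible in $L^\infty(\mathbb{T}^n)$; and \textbf{(b)} if $\phi$ is invertible in $L^\infty(\mathbb{T}^n)$, then $\dim\ker T_\phi^*=\infty \iff \phi$ is non-invertible in $H^\infty(\mathbb{D}^n)$. Together (a) and (b) give the theorem.

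For (a): if $|\phi|\ge\delta$ a.e.\ on $\mathbb{T}^n$ then $\|T_\phi f\|^2=\int_{\mathbb{T}^n}|\phi|^2|f|^2\,d\sigma\ge\delta^2\|f\|^2$. Conversely, suppose $\|T_\phi f\|\ge c\|f\|$ for all $f$ while $\sigma(\{|\phi|<c\})>0$; then $E:=\{|\phi|^2<c^2-\varepsilon\}$ has positive measure for some $\varepsilon>0$, and I pick $\zeta_0\in E$ which is a Lebesgue point of $|\phi|^2\in L^1(\mathbb{T}^n)$ with Lebesgue value $<c^2-\varepsilon$. Since $|k_{r\zeta_0}(\zeta)|^2/\|k_{r\zeta_0}\|^2$ is the product Poisson kernel of the polydisk at $r\zeta_0$, the polydisk Fatou theorem (radial approach) yields
\[
\frac{\|T_\phi k_{r\zeta_0}\|^2}{\|k_{r\zeta_0}\|^2}=\int_{\mathbb{T}^n}|\phi(\zeta)|^2\,\frac{|k_{r\zeta_0}(\zeta)|^2}{\|k_{r\zeta_0}\|^2}\,d\sigma(\zeta)\;\longrightarrow\;|\phi(\zeta_0)|^2<c^2-\varepsilon\qquad(r\to1^-),
\]
which contradicts $\|T_\phi f\|\ge c\|f\|$ for $r$ close to $1$. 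Hence $|\phi|\ge c$ a.e., i.e.\ $\phi$ is invertible in $L^\infty(\mathbb{T}^n)$. (This half works for all $n\ge1$.)

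For (b): if $\phi$ is invertible in $H^\infty(\mathbb{D}^n)$ then $T_\phi T_{1/\phi}=T_{1/\phi}T_\phi=I$, so $T_\phi^*$ is invertible and $\ker T_\phi^*=\{0\}$, giving ``$\Leftarrow$'' in contrapositive form. For the converse, assume $\phi$ is invertible in $L^\infty(\mathbb{T}^n)$ (hence nonzero, and by (a) the operator $T_\phi^*$ is surjective) but non-invertible in $H^\infty(\mathbb{D}^n)$; as $\inf_{\mathbb{D}^n}|\phi|>0$ would make $1/\phi$ bounded and holomorphic, non-invertibility forces $\inf_{z\in\mathbb{D}^n}|\phi(z)|=0$. \emph{Case (i): $\phi$ vanishes somewhere in $\mathbb{D}^n$.} Since $\phi\not\equiv0$ and $n\ge2$, the zero set $Z=\{z\in\mathbb{D}^n:\phi(z)=0\}$ is a non-empty analytic subvariety of dimension $n-1\ge1$ (Weierstrass preparation), hence uncountable; for countably many distinct $w^{(k)}\in Z$ one has $T_\phi^*k_{w^{(k)}}=\overline{\phi(w^{(k)})}\,k_{w^{(k)}}=0$, and reproducing kernels at distinct points are linearly independent, so $\dim\ker T_\phi^*=\infty$. \emph{Case (ii): $\phi$ is zero-free on $\mathbb{D}^n$.} Choose $w^{(k)}\in\mathbb{D}^n$ with $|\phi(w^{(k)})|\to0$; since $\phi$ is continuous and zero-free it is bounded away from $0$ on every compact subset of $\mathbb{D}^n$, so $(w^{(k)})$ eventually leaves each such set, whence $\max_j|w_j^{(k)}|\to1$, $\|k_{w^{(k)}}\|\to\infty$, and (testing against polynomials, which are dense, and using $\|\hat k^{(k)}\|=1$ where $\hat k^{(k)}:=k_{w^{(k)}}/\|k_{w^{(k)}}\|$) the normalized kernels satisfy $\hat k^{(k)}\to0$ weakly, while $\|T_\phi^*\hat k^{(k)}\|=|\phi(w^{(k)})|\to0$. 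If $\dim\ker T_\phi^*<\infty$, then since $T_\phi^*$ is surjective its restriction to $(\ker T_\phi^*)^\perp$ is bounded below by some $c>0$; writing $\hat k^{(k)}=a_k+b_k$ with $a_k\in\ker T_\phi^*$ and $b_k\perp\ker T_\phi^*$ gives $c\|b_k\|\le\|T_\phi^*b_k\|=|\phi(w^{(k)})|\to0$, so $\|a_k\|\to1$ while $a_k=\hat k^{(k)}-b_k\to0$ weakly, which is impossible in the finite dimensional space $\ker T_\phi^*$. Hence $\dim\ker T_\phi^*=\infty$ in this case too.

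The step I expect to be the crux is Case (i) of (b): it is the only place where $n\ge2$ is genuinely used, and it is exactly where the one-variable statement fails, since for $n=1$ a finite Blaschke product is invertible in $L^\infty(\mathbb{T})$ and non-invertible in $H^\infty(\mathbb{D})$ yet has a finite dimensional $\ker T_\phi^*$. For $n\ge2$ the rigidity of zero sets of holomorphic functions removes this obstruction, while the purely transcendental situation --- $\phi$ zero-free with $\inf_{\mathbb{D}^n}|\phi|=0$, such as a singular inner function of one variable --- is handled uniformly by the approximate-eigenvector/weak-limit argument of Case (ii), valid for all $n\ge1$.
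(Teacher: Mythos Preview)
Your argument is correct and follows a genuinely different, more self-contained route than the paper's. For the equivalence in (a), the paper obtains the nontrivial implication (``$T_\phi$ bounded below $\Rightarrow$ $\phi$ invertible in $L^\infty(\mathbb{T}^n)$'') by noting that $\phi H^2(\mathbb{D}^n)$ is then a closed shift-invariant subspace and invoking the Koca--Sadik theorem; you instead prove it directly by recognizing $|k_{r\zeta_0}|^2/\|k_{r\zeta_0}\|^2$ as the product Poisson kernel and applying the polydisk Fatou theorem to $|\phi|^2\in L^\infty(\mathbb{T}^n)$. For (b), the paper uses a single stroke: the Ahern--Clark theorem guarantees that any \emph{singly} generated proper shift-invariant subspace of $H^2(\mathbb{D}^n)$ with $n>1$ has infinite-dimensional orthogonal complement, and $E=\phi H^2(\mathbb{D}^n)=\mathrm{Ran}\,T_\phi$ is exactly such a subspace, so $\ker T_\phi^*=E^\perp$ is infinite dimensional. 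You replace this by a dichotomy: if $\phi$ vanishes somewhere in $\mathbb{D}^n$ you use that for $n\ge 2$ the zero set is uncountable and that reproducing kernels at zeros lie in $\ker T_\phi^*$; if $\phi$ is zero-free with $\inf_{\mathbb{D}^n}|\phi|=0$ you run an approximate-eigenvector argument with weakly null normalized kernels, valid for every $n\ge 1$. The paper's proof is shorter and structurally uniform by outsourcing the two hard steps to cited theorems, while yours is elementary, isolates precisely where $n\ge 2$ is used (only in Case~(i)), and as a byproduct recovers the one-variable fact that zero-free symbols with $\inf|\phi|=0$ (e.g.\ singular inner functions) already force $\dim\ker T_\phi^*=\infty$.
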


	This shows that analytic Toeplitz operators with universal adjoints are far more ubiquitous in the higher dimensional case $n>1$. \\ \\
	$\mathbf{Corollary.}$
	\emph{If $\phi$ is a non-constant inner function or a polynomial in the ring $\mathbb{C}[z_1,\ldots,z_n]$ that has zeros in $\mathbb{D}^n$ but no zeros in $\mathbb{T}^n$, then $T_\phi^*$ is universal for $H^2(\mathbb{D}^n)$ with $n>1$. } \\
	
	In particular the backward shift operators $T^*_{z_1},\ldots, T^*_{z_n}$ are universal when $n>1$. These results are \emph{not} true in general for $H^2(\mathbb{D})$. A closed subspace $E\subset H^2(\mathbb{D}^n)$ is said to be $z_i$-invariant if $z_i E\subset E$ for some $i=1,\ldots,n$. A $T_\phi$-invariant subspace $E$ is called maximal if $E\subsetneq H^2(\mathbb{D}^n)$ and there is no $T_\phi$-invariant subspace $L$ such that $E\subsetneq L \subsetneq H^2(\mathbb{D}^n)$. Since every maximal $T_\phi$-invariant subspace $E$  corresponds to a minimal $T^*_\phi$-invariant subspace $E^\perp$, we obtain the following version of the ISP. \\ \\
	$\mathbf{An \ equivalent \ version \ of \ the \ ISP.}$
	\emph{ Suppose $n>1$ and i=1,\ldots,n.	Then the ISP has a positive solution if and only if every maximal $z_i$-invariant subspace has codimension $1$ in $H^2(\mathbb{D}^n)$. } \\
	
	The maximal $z$-invariant subspaces in $H^2(\mathbb{D})$ indeed have codimension one by Beurling's Theorem. Hedenmalm \cite{Hedenmalm Maximal} proved that this is also true  in the classical Bergman space $L^2_a(\mathbb{D})$. A closed subspace $E\subset H^2(\mathbb{D}^n)$ that is $z_i$-invariant for all  $i=1,\ldots, n$ simultaneously is called a \emph{shift-invariant} subspace. The description of all shift-invariant subspaces for $n>1$ is a central open problem of multivariable operator theory (see the recent surveys of Sarkar \cite{Sarkar} and Yang \cite{Yang}{\tiny }) which could therefore lead to a solution of the ISP.
	\section{Proof of theorem}
	Cowen and Gallardo-Gutiérrez (\cite{Cowen- Gallardo 1},\cite{Cowen- Gallardo 2},\cite{Cowen- Gallardo 3},\cite{Cowen- Gallardo 4}) have frequently observed that if $\phi\in H^\infty(\mathbb{D})$ and there is an $\ell>0$ so that $|\phi(e^{i\theta}))|>\ell$ almost everywhere on $\mathbb{T}$, then $T_{1/\phi}$ is a left inverse for $T_\phi$ on $H^2(\mathbb{D})$ and therefore that $T_\phi^*$ is surjective. Our first goal is to characterize the left-invertibility of analytic Toeplitz operators on $H^2(\mathbb{D}^n)$ for all $n\geq 1$. The following general result can be found in functional analysis textbooks, but we state it here for completeness.
	
	\begin{lem}\label{Left invertible} Let $T$ be a bounded operator on $\mathcal{H}$. Then the following are equivalent.
		\begin{enumerate}
			\item $T$ is left-invertible.
		\item $T^*$ is surjective.
		\item $T$ is injective and has closed range.
			\end{enumerate}
		\end{lem}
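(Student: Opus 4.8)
The plan is to prove the three conditions equivalent through the cyclic chain $(1)\Rightarrow(2)\Rightarrow(3)\Rightarrow(1)$, using nothing beyond elementary Hilbert space geometry and the open mapping (equivalently, bounded inverse) theorem. For $(1)\Rightarrow(2)$ I would just take adjoints: if $ST=I$ for some bounded $S$, then $T^*S^*=I$, so $S^*$ is a right inverse of $T^*$ and in particular $T^*$ is surjective.

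For $(2)\Rightarrow(3)$, I would first recall the identity $\ker T=(\operatorname{ran}T^*)^\perp$, which follows from $\langle Tx,y\rangle=\langle x,T^*y\rangle$; since $T^*$ is surjective this complement is $\{0\}$, so $T$ is injective. To see that $\operatorname{ran}T$ is closed, apply the open mapping theorem to the surjection $T^*$ to get a constant $c>0$ such that every $x\in\mathcal H$ has a preimage $w$ with $T^*w=x$ and $\|w\|\le c\|x\|$. Then $\|x\|^2=\langle x,T^*w\rangle=\langle Tx,w\rangle\le c\|x\|\,\|Tx\|$, so $\|Tx\|\ge c^{-1}\|x\|$, i.e.\ $T$ is bounded below; a routine Cauchy-sequence argument then shows $\operatorname{ran}T$ is closed (and re-proves injectivity).

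For $(3)\Rightarrow(1)$, note that $R:=\operatorname{ran}T$ is, being a closed subspace of $\mathcal H$, itself a Hilbert space, and $T$ is a continuous linear bijection of $\mathcal H$ onto $R$; the bounded inverse theorem provides a bounded inverse $T^{-1}\colon R\to\mathcal H$. Writing $P$ for the orthogonal projection of $\mathcal H$ onto the closed subspace $R$, the operator $S:=T^{-1}P$ is bounded and satisfies $ST=T^{-1}PT=T^{-1}T=I$, so $T$ is left-invertible.

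There is no real obstacle here: the only non-elementary ingredients are the two invocations of the open mapping / bounded inverse theorem, and if one step deserves the name it is the boundedness-below estimate in $(2)\Rightarrow(3)$, which is where the surjectivity of $T^*$ is converted into quantitative information about $T$.
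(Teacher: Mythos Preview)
Your proof is correct and follows the same cyclic scheme $(1)\Rightarrow(2)\Rightarrow(3)\Rightarrow(1)$ as the paper. The only difference is in the level of detail: for $(2)\Rightarrow(3)$ the paper simply invokes the closed range theorem as a black box, whereas you give a self-contained argument via the open mapping theorem and the bounded-below estimate; and for $(3)\Rightarrow(1)$ the paper writes down $T^{-1}T=I$ without saying how $T^{-1}$ is extended from $\operatorname{ran}T$ to all of $\mathcal H$, while you make this explicit by composing with the orthogonal projection. So your argument is slightly more elementary and a bit more careful, but not a genuinely different approach.
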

	\begin{proof} If $T$ is left-invertible then there exists an bounded operator $S$ with $ST=I$ and hence $T^*S^*=I$. So $T^*$ is surjective and $(1)\implies (2)$. The equivalence $(2)\iff (3)$ is the closed range theorem \cite[Theorem 4.1.5]{Rudin}.
	If $T$ is injective and has closed range, then it has a bounded inverse $T^{-1}:T(\mathcal{H})\to\mathcal{H}$ by the open mapping theorem \cite[Corollary 2.12]{Rudin}. Therefore $T^{-1}T=I$ and $(3)\implies (1)$.
		\end{proof}
	Recently Koca and Sadik \cite{Koca-Sadik} proved that if $f\in H^\infty(\mathbb{D}^n)$, then the subspace $E=f  H^2(\mathbb{D}^n)$ is shift-invariant if and only if $f$ is invertible in $L^\infty(\mathbb{T}^n)$. Using this result we obtain the following characterization of left-invertibility for analytic $T_\phi$.
	
	\begin{prop}\label{Surjective} Let $T_\phi$ be an analytic Toeplitz operator on $H^2(\mathbb{D}^n)$ for $n\geq 1$. Then $T_\phi$ is left-invertible if and only if
	$\phi$ is invertible in $L^\infty(\mathbb{T}^n)$.	\end{prop}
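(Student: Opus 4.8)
The plan is to combine Lemma \ref{Left invertible} with the Koca--Sadik characterization of shift-invariant principal submodules. By Lemma \ref{Left invertible}, $T_\phi$ is left-invertible if and only if $T_\phi$ is injective with closed range. Since $T_\phi f = \phi f$ for analytic symbols, injectivity of $T_\phi$ is automatic whenever $\phi\not\equiv 0$ (a nonzero holomorphic function on $\mathbb D^n$ cannot annihilate a nonzero element of $H^2(\mathbb D^n)$, as the product $\phi f$ vanishes identically only if one factor does). So the content is entirely in the closed-range condition, and the claim reduces to: the range $\phi H^2(\mathbb D^n)$ is closed if and only if $\phi$ is invertible in $L^\infty(\mathbb T^n)$.

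First I would prove the easy direction. Suppose $\phi$ is invertible in $L^\infty(\mathbb T^n)$, say $|\phi|\ge \ell>0$ a.e.\ on $\mathbb T^n$. Then $1/\phi\in L^\infty(\mathbb T^n)$, so the Toeplitz operator $T_{1/\phi}$ is bounded on $H^2(\mathbb D^n)$, and for $f\in H^2(\mathbb D^n)$ we have $T_{1/\phi}T_\phi f = P\big(\tfrac{1}{\phi}\cdot \phi f\big) = P(f) = f$, because $\phi f\in H^2(\mathbb D^n)$ and multiplying an $H^2$ function by $\phi$ and then by $1/\phi$ returns it. Hence $T_{1/\phi}$ is a bounded left inverse of $T_\phi$, so $T_\phi$ is left-invertible. (Equivalently, one notes $\|f\| = \|T_{1/\phi}T_\phi f\|\le \|T_{1/\phi}\|\,\|T_\phi f\|$ gives the bounded-below estimate, forcing closed range.)

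For the converse, suppose $T_\phi$ is left-invertible, hence has closed range, so $E:=\phi H^2(\mathbb D^n) = T_\phi(H^2(\mathbb D^n))$ is a closed subspace. It is clearly $z_i$-invariant for every $i$, since $z_i(\phi f) = \phi(z_i f)$ and $z_i f\in H^2(\mathbb D^n)$; thus $E$ is a shift-invariant subspace of the form $f H^2(\mathbb D^n)$ with $f=\phi\in H^\infty(\mathbb D^n)$. By the theorem of Koca and Sadik, such a principal submodule is shift-invariant precisely when $\phi$ is invertible in $L^\infty(\mathbb T^n)$, which is the desired conclusion. The one point requiring a little care — and the main obstacle — is the need to know that $\phi H^2(\mathbb D^n)$ being \emph{closed} is what triggers the Koca--Sadik hypothesis; I should confirm that in their statement "shift-invariant subspace" already incorporates closedness (as is standard), so that closed range of $T_\phi$ is exactly the hypothesis needed, and that no extra regularity on $\phi$ beyond $\phi\in H^\infty(\mathbb D^n)$ is assumed there. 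Granting that, the proposition follows immediately.
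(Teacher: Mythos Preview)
Your proof is correct and follows the paper's argument essentially verbatim: the easy direction via the explicit left inverse $T_{1/\phi}$, and the converse by noting closed range makes $E=\phi H^2(\mathbb D^n)$ a (closed) shift-invariant subspace, then invoking Koca--Sadik. Your caveat about confirming that ``shift-invariant subspace'' in their statement means a closed subspace is exactly right; the paper uses it that way, and once that is granted the argument is complete.
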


\begin{proof} If $\phi$ is invertible in $L^\infty(\mathbb{T}^n)$, then $T_{1/\phi}$ is a (not necessarily analytic) Toeplitz operator and 
	\[
	T_{1/\phi}T_\phi f=T_{1/\phi}(\phi f)=Pf=f.
	\]
 for all $f\in H^2(\mathbb{D}^n)$. Therefore $T_\phi$ is left-invertible. Conversely if $T_\phi$ is left-invertible, then it has closed range by Lemma \ref{Left invertible}. Then $E=\phi H^2(\mathbb{D}^n)$ is closed and satisfies $z_i E\subset E$ for all $i=1,\ldots,n$. Hence $E$ is a shift-invariant subspace and therefore $\phi$ is invertible in $L^\infty(\mathbb{T}^n)$ by the result of Koca and Sadik \cite[Theorem 2]{Koca-Sadik}.
\end{proof}
 
 Therefore by Lemma \ref{Left invertible} and Proposition \ref{Surjective} we know precisely when the adjoint of an analytic Toeplitz operator is surjective. The key ingredient in the proof of the main theorem is a result of Ahern and Clark \cite[Theorem 3]{Aherk-Clark}. \\ \\
 $\mathbf{The \ Ahern \ and \ Clark \ Theorem.}$
 \emph{	If $f_1,\ldots,f_k\in H^2(\mathbb{D}^n) $ with $k<n$, then the shift-invariant subspace E generated by $f_1,\ldots,f_k$ is either all of $H^2(\mathbb{D}^n)$ or $E^\perp$ is infinite dimensional.} \\
 
 By the shift-invariant subspace $E$ \emph{generated} by $f_1,\ldots,f_k$ means the smallest shift-invariant subspace containing $f_1,\ldots,f_k$. So the  shift-invariant subspaces of the form $\phi H^2(\mathbb{D}^n)$ for some invertible $\phi\in L^\infty(\mathbb{T}^n)$ are singly generated. We arrive at our main result. \\ \\
  $\mathbf{Theorem.}$
 \emph{Let $\phi\in H^\infty(\mathbb{D}^n)$ for $n>1$. Then $T_\phi^*$ is surjective and has infinite dimensional kernel if and only if $\phi$ is invertible in $L^\infty(\mathbb{T}^n)$ but not in $H^\infty(\mathbb{D}^n)$.} 
 \begin{proof} Suppose $T_\phi^*$ is surjective and $\mathrm{Ker}(T_\phi^*)$ is infinite dimesnional. Then $\phi$ is invertible in $L^\infty(\mathbb{T}^n)$ by Lemma \ref{Left invertible} and Proposition \ref{Surjective}. It follows that the closed subspace $E=\phi H^2(\mathbb{D}^n)$ has infinite codimension since $E^\perp=\mathrm{Ker}(T_\phi^*)$. In particular $\phi H^2(\mathbb{D}^n)\neq H^2(\mathbb{D}^n)$ and hence $1/\phi\notin H^\infty(\mathbb{D}^n)$. Conversely suppose $\phi$ is invertible in $L^\infty(\mathbb{T}^n)$ but not in $H^\infty(\mathbb{D}^n)$. Then $T_\phi^*$ is surjective by Lemma \ref{Left invertible} and Proposition \ref{Surjective}. The assumption that  $1/\phi\notin H^\infty(\mathbb{D}^n)$ means that  $E=\phi H^2(\mathbb{D}^n)$ is a proper subspace of $H^2(\mathbb{D}^n)$. Therefore $E^\perp=\mathrm{Ker}(T_\phi^*)$ is infinite dimensional by the Ahern and Clark Theorem.
 	\end{proof}
 We immediately obtain the following dichotomy. \\ \\
$\mathbf{Corollary.}$
\emph{Let $T_\phi$ be a left-invertible analytic Toeplitz operator on $H^2(\mathbb{D}^n)$ for some $n>1$. Then either $T_\phi$ is invertible or $T_\phi^*$ is universal. } \\

		\section*{Acknowledgement}
		This work constitutes a part of the doctoral thesis of the first author, which is supervised by the second author.
	\bibliographystyle{amsplain}

\end{document}